\newtheorem{theorem}{Theorem}[section]
\newtheorem{lemma}[theorem]{Lemma}
\newtheorem{proposition}[theorem]{Proposition}
\newtheorem{corollary}[theorem]{Corollary}
\theoremstyle{definition}
\newtheorem{remark}[theorem]{Remark}
\newenvironment{proofof}[1]{\noindent{\it Proof of
#1.}}{\hfill$\square$\\\mbox{}}
\begin{document}

\title[Constructive noncommmutative invariant theory]
{Constructive noncommmutative invariant theory}

\author[M\'aty\'as Domokos and Vesselin Drensky]
{M\'aty\'as Domokos and Vesselin Drensky}
\address{MTA Alfr\'ed R\'enyi Institute of Mathematics,
Re\'altanoda utca 13-15, 1053 Budapest, Hungary}
\email{domokos.matyas@renyi.mta.hu}
\address{Institute of Mathematics and Informatics,
Bulgarian Academy of Sciences,
Acad. G. Bonchev Str., Block 8,
1113 Sofia, Bulgaria}
\email{drensky@math.bas.bg}

\thanks{This project was carried out in the framework of the exchange program
between the Hungarian and Bulgarian Academies of Sciences.
It was partially supported by the Hungarian National Research, Development and Innovation Office,  NKFIH K 119934 and
by Grant I02/18 of the Bulgarian National Science Fund.}

\subjclass[2010]{16R10; 13A50; 15A72; 16W22; 17B01; 17B35.}

\keywords{relatively free associative algebras, polarization,
noncommutative invariant theory}

\begin{abstract}
The problem of finding generators of the subalgebra of invariants under the action of a group of automorphisms
of a finite dimensional Lie algebra on its universal enveloping algebra
is reduced to finding homogeneous generators of the same group acting on the symmetric tensor algebra of the Lie algebra.
This process is applied to prove a constructive Hilbert-Nagata Theorem (including degree bounds)
for the algebra of invariants in a Lie nilpotent relatively free associative algebra
endowed with an action induced by a representation of a reductive group.
\end{abstract}

\maketitle

\section{Introduction}\label{sec:intro}

Let $G$ be a subgroup of the automorphism group of a finite dimensional Lie algebra
$L$ over a field $K$ of characteristic zero. Then there is a natural induced action of $G$ on the universal enveloping algebra
$U(L)$ via associative $K$-algebra automorphisms.
First we describe a process by which one can construct generators
of the subalgebra $U(L)^G$ of $G$-invariants in $U(L)$ starting from a homogeneous generating system
of $S(L)^G$, the commutative algebra of invariants in the symmetric tensor algebra
$S(L)$ of $L$.

Namely, there is a well known $K$-vector space isomorphism $\omega:S(L)\to U(L)$ 
called the \emph{canonical bijection} in \cite[2.4.6]{dixmier}. The details  of the construction of $\omega$ 
will be given in Section~\ref{sec:enveloping}, see \eqref{eq:canonical-bijection}.  
Note that the map $\omega$ is not an algebra homomorphism. 
However, we have the following:

\begin{theorem}\label{thm:enveloping}
Suppose that $\{f_{\lambda}\mid \lambda\in \Lambda\}$ is a homogeneous generating system
of the commutative $K$-algebra $S(L)^G$, where $G$ is a subgroup
of the automorphism group of the finite dimensional Lie algebra $L$.
Then $\{\omega(f_{\lambda})\mid \lambda\in\Lambda\}$ generates the $K$-algebra $U(L)^G$.
\end{theorem}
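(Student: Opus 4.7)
The plan is to exploit the Poincar\'e--Birkhoff--Witt filtration $U_0(L)\subseteq U_1(L)\subseteq \cdots$ on $U(L)$, whose associated graded algebra is $S(L)$, together with two properties of the canonical bijection that I expect to be established in Section~\ref{sec:enveloping}: (i) $\omega$ is $G$-equivariant, and hence restricts to a $K$-linear bijection $S(L)^G\to U(L)^G$; and (ii) for $f\in S^n(L)$, the element $\omega(f)$ lies in $U_n(L)$ and its image in the quotient $U_n(L)/U_{n-1}(L)\cong S^n(L)$ is $f$ itself.

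From (i)--(ii) I would first derive a ``failure-of-multiplicativity'' estimate: for homogeneous $h_1,\ldots,h_k\in S(L)$ of total degree $n$,
\[
\omega(h_1)\omega(h_2)\cdots \omega(h_k)\;\equiv\;\omega(h_1h_2\cdots h_k)\pmod{U_{n-1}(L)},
\]
since both sides lie in $U_n(L)$ and share the symbol $h_1h_2\cdots h_k$ in $S^n(L)$. This is the quantitative statement controlling how far $\omega$ is from being an algebra homomorphism.

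The main argument is then an induction on $n$, with the inductive claim that $U(L)^G\cap U_n(L)$ lies in the subalgebra $A\subseteq U(L)$ generated by $\{\omega(f_\lambda)\mid \lambda\in\Lambda\}$. Given $u\in U(L)^G\cap U_n(L)$, its symbol $\overline{u}\in S^n(L)$ is $G$-invariant because $G$ preserves the PBW filtration, so by the hypothesis on the $f_\lambda$ (and since $S(L)^G$ is graded), $\overline{u}$ admits a homogeneous polynomial expression $P$ of degree $n$ in the $f_\lambda$. Replacing each $f_\lambda$ in $P$ by $\omega(f_\lambda)$, after fixing an arbitrary ordering of the factors in each monomial, yields an element $u'\in A\cap U(L)^G$ whose symbol equals $\overline{u}$ by the displayed congruence; hence $u-u'\in U_{n-1}(L)\cap U(L)^G$, which lies in $A$ by the induction hypothesis, and so does $u$. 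The base case $n=0$ is trivial since $U_0(L)=K\cdot\omega(1)$.

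I do not expect a real obstacle: the ordering ambiguity in the construction of $u'$ only shifts things by elements of $U_{n-1}(L)$ and is absorbed into the induction step, so once the two classical properties (i) and (ii) of the symmetrization map are verified, the argument is essentially forced.
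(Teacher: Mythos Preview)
Your proposal is correct and takes essentially the same approach as the paper: an induction on the filtration degree showing $U(L)_d^G\subseteq A$ (the subalgebra generated by the $\omega(f_\lambda)$), using that the associated graded of $U(L)$ is $S(L)$ and that the symbol of a product $\omega(f_{\lambda_1})\cdots\omega(f_{\lambda_k})$ is $f_{\lambda_1}\cdots f_{\lambda_k}$. The only cosmetic difference is that the paper routes the argument through an auxiliary graded subalgebra $M\subseteq T(L)$ with $\pi_{U(L)}(M)=A$ and a commutative diagram, whereas you work directly in $U(L)$ with your displayed congruence; the content is identical.
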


Theorem~\ref{thm:enveloping} will be applied to noncommutative invariant theory in relatively free associative algebras.
In this paper by `algebra' we mean an associative $K$-algebra with unity, unless explicitly stated otherwise. The base field $K$ is always assumed to have characteristic zero. 
For a finite dimensional $K$-vector space $V$ denote by $T(V)$  the tensor algebra of $V$;
that is, $T(V)$ is the free algebra generated by a basis of $V$.
Given a variety $\mathfrak{R}$ of algebras we write $F(\mathfrak{R},V)$ for the
{\it relatively free algebra} in $\mathfrak{R}$ generated by a basis of $V$.
Recall that $\mathfrak{R}$ consists of all algebras that satisfy a given set of polynomial identities,
so  $F(\mathfrak{R},V)$ is the factor algebra of $T(V)$ modulo a T-ideal (an ideal stable under all algebra endomorphisms of $T(V)$).
Since we deal with algebras with unity and $K$ is infinite, $\mathfrak{R}$ necessarily contains the variety of commutative algebras,
and we have the canonical surjections
\begin{align*}\label{eq:surjections}
T(V) \twoheadrightarrow F({\mathfrak{R}},V)\twoheadrightarrow S(V) .
\end{align*}
where $S(V)$ is the symmetric tensor algebra of $V$ (i.e., the commutative polynomial algebra generated by a basis of $V$).
The above algebra surjections are $GL(V)$-equivariant. They are homomorphisms of graded algebras, where $T(V)=\bigoplus_{d=0}^\infty T^d(V)$ is endowed with the standard grading: the $d^{\mathrm{th}}$ tensor power $T^d(V)$ of $V$ is the degree $d$ homogenerous component of $T(V)$.
Given a non-zero homogeneous element $f$ in any of $T(V)$, $F({\mathfrak{R}},V)$ or $S(V)$ we shall write $\deg(f)$ for the degree of $f$.
Note that
when $\mathfrak{R}$ is the variety of all algebras we have that
$F(\mathfrak{R},V)=T(V)$, and when $\mathfrak{R}$ is the variety of commutative algebras we have $F(\mathfrak{R},V)=S(V)$.

Let $G$ be a linear algebraic group and $V$ a finite dimensional rational $G$-module;
that is, we are given a group homomorphism $\rho:G\to GL(V)$ that is a morphism of affine
algebraic varieties (from now on we shall usually omit the attribute `rational' and simply say that $V$ is a $G$-module).
The action of  $G$ on $V$
induces an action on $T(V)$, $F(\mathfrak{R},V)$, and $S(V)$ via automorphisms of graded algebras, and the above surjections are $G$-equivariant.
We are interested in the  subalgebra
\[
F(\mathfrak{R},V)^G=\{f\in F(\mathfrak{R},V)\mid g\cdot f =f\quad \text{ for all }g\in G\}
\]
of {\it $G$-invariants}. We refer to \cite{D3} and \cite{F} for surveys on results concerning subalgebras of invariants in relatively free algebras.

For an integer $p\ge 1$ denote by $\mathfrak{N}_p$ the variety of {\it Lie nilpotent algebras
of Lie nilpotency index less or equal to $p$}. In other words,  $\mathfrak{N}_p$
is the variety of algebras satisfying the polynomial identity $[x_1,\dots,x_{p+1}]=0$.
Here $[x_1,x_2]=x_1x_2-x_2x_1$, and for $i\ge 3$ we have
$[x_1,\dots,x_i]=[[x_1,\dots,x_{i-1}],x_i]$.

\begin{remark} For a (nonunitary) associative algebra nilpotence of index $\le p$ means that the algebra satisfies the polynomial identity
$x_1\cdots x_p=0$. By analogy with group theory a Lie algebra is nilpotent of index $\le p$, if it satisfies the polynomial identity
$[x_1,\dots,x_{p+1}]=0$.
\end{remark}

Our starting point is the following non-commutative generalization of the Hilbert-Nagata theorem (see for example \cite[Theorem A, p. 3]{grosshans}):

\begin{theorem}\label{thm:DD:1996} (\cite[Theorem  3.1]{DD:1996})
Suppose that $G$ is reductive, and $\mathfrak{R}\subseteq \mathfrak{N}_p$
for some $p\ge 1$. Then $F(\mathfrak{R},V)^G$ is a finitely generated algebra for any
$G$-module $V$.
\end{theorem}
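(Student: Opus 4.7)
The plan is to reduce the claim to Theorem~\ref{thm:enveloping} applied to a carefully chosen finite-dimensional Lie subalgebra. Let $L$ denote the Lie subalgebra of $F(\mathfrak{R},V)$ (with respect to the commutator bracket $[a,b]=ab-ba$) generated by $V$. Because $F(\mathfrak{R},V)\in\mathfrak{R}\subseteq\mathfrak{N}_p$, every iterated commutator of length exceeding $p$ in $F(\mathfrak{R},V)$ vanishes, so $L$ is Lie nilpotent of class at most $p$; hence $L$ is spanned over $K$ by $V$ together with the finitely many iterated brackets of basis vectors of $V$ of length at most $p$, which gives $\dim_K L<\infty$. Since the $G$-action on $F(\mathfrak{R},V)$ is by graded algebra automorphisms, it preserves both the commutator bracket and the standard grading, so $L$ is stable under $G$ and becomes a finite-dimensional rational $G$-module on which $G$ acts by Lie algebra automorphisms.

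Next I would invoke the universal property of the enveloping algebra: the inclusion $L\hookrightarrow F(\mathfrak{R},V)$ is a Lie algebra homomorphism, and therefore extends uniquely to an associative $K$-algebra homomorphism $\pi\colon U(L)\to F(\mathfrak{R},V)$. This map is surjective because $V\subseteq L$ generates $F(\mathfrak{R},V)$ as an associative algebra. Moreover, the $G$-action on $L$ extends uniquely to an action on $U(L)$ by algebra automorphisms, and $\pi$ is $G$-equivariant because the two resulting algebra maps $U(L)\to F(\mathfrak{R},V)$ agree on the generating set $L$.

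The classical Hilbert--Nagata theorem now supplies a finite homogeneous generating system $f_1,\dots,f_n$ of $S(L)^G$, using reductivity of $G$ and the rationality of $L$. Theorem~\ref{thm:enveloping} then yields that $\omega(f_1),\dots,\omega(f_n)$ generate $U(L)^G$. Finally, because $G$ is reductive the functor of $G$-invariants is exact, so the $G$-equivariant surjection $\pi$ restricts to a surjection $U(L)^G\twoheadrightarrow F(\mathfrak{R},V)^G$; consequently $\pi(\omega(f_1)),\dots,\pi(\omega(f_n))$ generate $F(\mathfrak{R},V)^G$, establishing finite generation.

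The step deserving the most care is the first one: confirming that $L$ is finite-dimensional and rational as a $G$-module. Both properties rest on the nilpotency identity $[x_1,\dots,x_{p+1}]=0$ together with the observation that each homogeneous component of $L$ embeds as a $G$-submodule of the corresponding (finite-dimensional) homogeneous component of $F(\mathfrak{R},V)$. Once this is secured, the rest is a formal transfer through Theorem~\ref{thm:enveloping} combined with reductivity; no extra noncommutative input is required beyond that theorem.
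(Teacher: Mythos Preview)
Your argument is correct. The only step I would tighten slightly is Step~5: to invoke exactness of $(-)^G$ you need $U(L)$ and $F(\mathfrak{R},V)$ to be rational $G$-modules, which you implicitly use; this follows from the PBW filtration (respectively the grading) exhibiting each as a union of finite-dimensional $G$-submodules.

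Note, however, that the paper does not itself prove Theorem~\ref{thm:DD:1996}; it merely cites \cite{DD:1996}, whose argument is described in the paragraph following Remark~\ref{rem:converseDD:1996} as non-constructive, resting on Latyshev's theorem that $F(\mathfrak{R},V)$ is Noetherian when $\mathfrak{R}\subseteq\mathfrak{N}_p$ (the Hilbert-style proof). Your route is quite different from that one: instead of Noetherianity you pass through the enveloping algebra and reduce to commutative Hilbert--Nagata via Theorem~\ref{thm:enveloping}. This is exactly the strategy the paper develops to prove the stronger constructive statement Theorem~\ref{thm:main2}; the only cosmetic difference is that the paper works with the free nilpotent Lie algebra $L_p(V)$ (with underlying space $V^{[\le p]}$), whereas you take its quotient $L\subseteq F(\mathfrak{R},V)$ directly. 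The paper also uses Lemma~\ref{lemma:surjective} with the reductive group $GL(V)\supseteq G$ rather than reductivity of $G$ itself, which is why Theorem~\ref{thm:main2} needs no reductivity hypothesis on $G$; for the purpose of Theorem~\ref{thm:DD:1996}, where $G$ is assumed reductive, your simpler invocation of exactness of invariants suffices.
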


\begin{remark} \label{rem:converseDD:1996}
The assumption $\mathfrak{R}\subseteq \mathfrak{N}_p$ for some $p\ge 1$ above is necessary, as the following converse of Theorem~\ref{thm:DD:1996} is
also shown in \cite{DD:1996}: If $\dim_K(V)\ge 2$ and
$F(\mathfrak{R},V)^G$ is finitely generated for all reductive subgroups $G$ of $GL(V)$, then
$\mathfrak{R}\subseteq\mathfrak{N}_p$ for some $p\ge 1$.
\end{remark}

The proof of Theorem~\ref{thm:DD:1996} in \cite{DD:1996} is non-constructive, since it  uses the Noetherian property
of $F(\mathfrak{R},V)$ for $\mathfrak{R}\subset \mathfrak{N}_p$ (proved in \cite{La}) similarly to the fundamental paper of Hilbert \cite{hilbert:1}
on the commutative case $p=1$, where the Hilbert Basis Theorem was proved.
Hilbert gave a more constructive proof of the commutative case in \cite{hilbert:2}
(explicit degree bounds for the generators were first proved by Popov \cite{popov:1}, \cite{popov:2},
and stronger bounds more recently by Derksen \cite{derksen:2}).
For constructive (commutative) invariant theory of reductive groups see also
\cite{derksen:1},  \cite{derksen-kemper:book}.

In the present paper we make Theorem~\ref{thm:DD:1996} constructive.
In algorithms for computing generators of algebras of invariants
a crucial role is played by degree bounds.
For example, an a priori degree bound for the generators of the algebra $F(\mathfrak{R},V)^G$ implies an algorithm to find explicit generators
(by solving systems of linear equations). From a different perspective,
sometimes there is a qualitatively known process that yields generators of the algebra of invariants,
and the aim is to  derive from it a degree bound for a minimal generating system,
to have some quantitative information on the algebra of invariants.

In order to discuss degree bounds we need to introduce some notation.
Given a graded algebra $\displaystyle A=\bigoplus_{d=0}^\infty A_d$ we denote by
$\beta(A)$ the minimal non-negative integer $d$ such that $A$ is generated by homogeneous elements of degree at most $d$
(and write $\beta(A)=\infty$ if there is no such $d$).
Recall that $S(V)=\bigoplus_{d=0}^\infty S^d(V)$ is graded, where the degree $d$ homogeneous component $S^d(V)$ is the 
$d^{\mathrm{th}}$ symmetric tensor power of $V$. 
Set
\[
\beta(G):=\sup\{\beta(S(V)^G)\mid V \text{ is a finite dimensional }G\text{-module}\}.
\]
It is a classical theorem of E. Noether \cite{noether} that for $G$ finite we have $\beta(G)\le |G|$.
On the other hand Derksen and Kemper \cite[Theorem 2.1]{derksen-kemper}
proved that $\beta(G)=\infty$ for any infinite group $G$. So for an infinite  group $G$,
finite degree bounds may hold only for restricted classes of $G$-modules.

Given a  set $\tau$ of isomorphism classes of simple $G$-modules, denote by
$\mathrm{add}(\tau)$ the class of all $G$-modules that are finite direct sums of simple modules whose isomorphism class belongs to $\tau$.
This definition is particularly natural when the group $G$ is reductive,
because in that case any $G$-module decomposes as a direct sum of simple $G$-modules.
By slight abuse of notation we write $V\in\tau$ if the isomorphism class of the $G$-module $V$ belongs to $\tau$.
Moreover, write $\tau^{\otimes p}$ for the isomorphism classes of simple summands
of all $G$-modules $V_1\otimes\cdots\otimes V_q$, where $V_i\in \tau$ and $q\le p$.
Set
\[
\beta_{\tau}:=\sup\{\beta(S(V)^G)\mid V\in\mathrm{add}(\tau)\}.
\]
Weyl's theorem \cite{weyl} on polarizations implies the following:

\begin{proposition}\label{prop:beta-tau} Let $G$ be a reductive group  and let
$\tau$ be a finite set of isomorphism classes of simple $G$-modules. Then the number
 $\beta_{\tau}$ is finite.
\end{proposition}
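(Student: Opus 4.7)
The plan is to bound $\beta_\tau$ by the degree bound for a single, fixed $G$-module using Weyl's polarization theorem together with reductivity. Write $\tau=\{[V_1],\ldots,[V_k]\}$, set $d_i := \dim_K V_i$, let $W := V_1\oplus\cdots\oplus V_k$, and put $D := \dim_K W = \sum_{i=1}^k d_i$. Any $V\in\mathrm{add}(\tau)$ is of the form $V = \bigoplus_{i=1}^k V_i^{\oplus n_i}$ for some non-negative integers $n_1,\ldots,n_k$; letting $N := \max(D, n_1, \ldots, n_k)$, the $G$-module $V$ is a direct summand of $W^{\oplus N}\cong\bigoplus_i V_i^{\oplus N}$, with complement $\bigoplus_i V_i^{\oplus (N-n_i)}$.

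The $G$-module decomposition $W^{\oplus N} = V\oplus V'$ yields a $G$-equivariant split surjection of graded algebras $S(W^{\oplus N})\cong S(V)\otimes S(V')\twoheadrightarrow S(V)$, obtained by projecting the second tensor factor onto its degree-zero component $K$. Restricting to $G$-invariants gives a surjection $S(W^{\oplus N})^G\twoheadrightarrow S(V)^G$ (split via $f\mapsto f\otimes 1$), so $\beta(S(V)^G)\le\beta(S(W^{\oplus N})^G)$. To handle the unbounded parameter $N$, I would invoke Weyl's polarization theorem applied to the single (reducible) $G$-module $W$: since $N\ge D=\dim W$, the action of $GL(K^N)$ on $W^{\oplus N}=W\otimes K^N$ commutes with $G$, induces graded $K$-algebra automorphisms on $S(W^{\oplus N})^G$, and together with the subalgebra consisting of $G$-invariants involving only the first $D$ copies of $W$ (which is isomorphic to $S(W^{\oplus D})^G$) generates the whole of $S(W^{\oplus N})^G$. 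Since this $GL(K^N)$-action preserves total degree, we obtain $\beta(S(W^{\oplus N})^G)\le\beta(S(W^{\oplus D})^G)$.

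Combining the two inequalities gives $\beta(S(V)^G)\le\beta(S(W^{\oplus D})^G)$ uniformly in $V\in\mathrm{add}(\tau)$. By the classical Hilbert--Nagata theorem applied to the reductive group $G$ acting on the finite-dimensional module $W^{\oplus D}$, the algebra $S(W^{\oplus D})^G$ is finitely generated, so $\beta(S(W^{\oplus D})^G)<\infty$ and hence $\beta_\tau\le\beta(S(W^{\oplus D})^G)<\infty$. The only delicate point is the correct invocation of Weyl's polarization theorem; this is the classical single-summand statement and applies without modification because we have replaced the multi-isotypic setup by the single $G$-module $W$ carrying all simple constituents of $\tau$.
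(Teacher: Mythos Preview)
Your proof is correct and follows essentially the same strategy as the paper: reduce to a single fixed $G$-module via Weyl's polarization theorem together with the graded $G$-equivariant surjection $S(B)\twoheadrightarrow S(A)$ associated to a direct summand $A\subseteq B$, and then invoke Hilbert--Nagata for finiteness. The only organizational difference is that the paper applies polarization separately to each isotypic component (taking $W=V_i$ and $U=\sum_{j\neq i}m_jV_j$ in the $h=1$ case of Theorem~\ref{thm:noncomm_weyl}), which yields the sharper statement $\beta_\tau=\beta\bigl(S(\sum_i \dim(V_i)\,V_i)^G\bigr)$, whereas you bundle all the $V_i$ into a single $W$ and polarize once, obtaining only $\beta_\tau\le\beta\bigl(S(W^{\oplus D})^G\bigr)$ with $D=\sum_i\dim(V_i)$; either bound suffices for the proposition.
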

\begin{proof}
Let $V_1,\dots,V_q$ be simple $G$-modules representing the isomorphism classes in $\tau$. Let $V$ be an arbitrary $G$-module in $\mathrm{add}(\tau)$.
Then $\displaystyle V\cong \sum_{i=1}^q m_iV_i$, where
the $m_i$ are non-negative integers and $m_iV_i$ stands for the direct sum of $m_i$ copies of $V_i$.
Weyl's theorem on polarizations (the special case $h=1$  of
Theorem~\ref{thm:noncomm_weyl} below) implies that
\begin{align}\label{eq:m_iv_i}\beta(S(V)^G)\le \beta(S(\sum_{i=1}^q\min\{m_i,\dim(V_i)\}V_i)^G)\le \beta(S(\sum_{i=1}^q\dim(V_i)V_i)^G).
\end{align}
(The second inequality above follows from the fact that if $A$ is a submodule of the $G$-module $B$,
then there is a $G$-equivariant graded $K$-algebra surjection $S(B)\to S(A)$ mapping
some of the variables to zero).  Since \eqref{eq:m_iv_i} holds for any $V\in\mathrm{add}(\tau)$, we conclude the equality
\[
\beta_{\tau}=\beta(S(\sum_{i=1}^q\dim(V_i)V_i)^G).
\]
The assumption on $G$ guarantees that the number on the right hand side above is finite.
\end{proof}

Turning to a variety $\mathfrak{R}$ of associative algebras, we write
\[
\beta_{\tau,\mathfrak{R}}:=\sup\{\beta(F(\mathfrak{R},V)^G)\mid V\in\mathrm{add}(\tau)\},
\]
where
$\beta(F(\mathfrak{R},V)^G)$ is the supremum of the degrees of the elements
in a minimal homogeneous generating system of the algebra $F(\mathfrak{R},V)^G$.
Having established this notation we are in position to state the following corollary of the results of the present paper:

\begin{theorem}\label{thm:main1}
Suppose that the group $G$ is reductive and the variety $\mathfrak{R}$ is contained in $\mathfrak{N}_p$ for some $p\ge 1$.
Let $\tau$ be a finite set of isomorphism classes of simple $G$-modules. Then we have the inequality
\[
\beta_{\tau,\mathfrak{R}}\le p\beta_{\tau^{\otimes p}}.
\]
\end{theorem}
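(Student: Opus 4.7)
My plan is to reduce the theorem to Theorem~\ref{thm:enveloping} applied to the universal enveloping algebra of a $G$-stable nilpotent Lie algebra built from $V$, and then to translate the $S$-degree bound from commutative invariant theory into the standard $V$-degree bound in the noncommutative setting.

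First I would reduce to the case $\mathfrak{R}=\mathfrak{N}_p$. For $V\in\mathrm{add}(\tau)$, the canonical quotient $F(\mathfrak{N}_p,V)\twoheadrightarrow F(\mathfrak{R},V)$ is a $G$-equivariant surjection of graded algebras; since $G$ is reductive, the Reynolds operator makes it surjective on $G$-invariants in each degree, so $\beta(F(\mathfrak{R},V)^G)\le\beta(F(\mathfrak{N}_p,V)^G)$ and it suffices to bound the latter.

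Next I would set up the enveloping-algebra picture. Let $L_p(V)\subseteq F(\mathfrak{N}_p,V)$ be the Lie subalgebra generated by $V$: this is a $G$-stable finite dimensional Lie algebra nilpotent of class $\le p$, with a $G$-invariant grading $L_p(V)=\bigoplus_{i=1}^{p}L_p^i(V)$ where every simple constituent of $L_p^i(V)$ is a simple constituent of $V^{\otimes i}$. Since $L_p(V)$ generates $F(\mathfrak{N}_p,V)$ as an associative algebra, there is a $G$-equivariant surjection $\pi\colon U(L_p(V))\twoheadrightarrow F(\mathfrak{N}_p,V)$ (in general not an isomorphism). Equipping $U(L_p(V))$ with the ``weight'' grading in which $L_p^i(V)$ sits in degree $i$, the map $\pi$ becomes a graded morphism matching the standard $V$-grading on $F(\mathfrak{N}_p,V)$, and by Reynolds again it restricts to a graded surjection on $G$-invariants; hence it suffices to bound $\beta(U(L_p(V))^G)$ in the weight grading. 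Theorem~\ref{thm:enveloping} then produces generators $\{\omega(f_\lambda)\}$ of $U(L_p(V))^G$ from any homogeneous generating set $\{f_\lambda\}$ of $S(L_p(V))^G$.

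Finally I would compare degrees and invoke $\beta_{\tau^{\otimes p}}$. Since $G$ preserves each $L_p^i(V)$, the algebra $S(L_p(V))^G$ is multigraded and admits multihomogeneous generators $f_\lambda$ of multidegree $(d_1,\ldots,d_p)$ with $\sum_i d_i\le\beta(S(L_p(V))^G)$. The symmetrization formula defining $\omega$ sends such an $f_\lambda$ to an element of $U(L_p(V))$ of weight degree $\sum_i i d_i\le p\sum_i d_i$, and hence $\pi(\omega(f_\lambda))$ has $V$-degree at most $p\sum_i d_i$ in $F(\mathfrak{N}_p,V)$. Since every simple constituent of $L_p^i(V)$ lies in $\tau^{\otimes i}\subseteq\tau^{\otimes p}$ for $i\le p$, we have $L_p(V)\in\mathrm{add}(\tau^{\otimes p})$, so $\beta(S(L_p(V))^G)\le\beta_{\tau^{\otimes p}}$, and combining the steps yields $\beta_{\tau,\mathfrak{R}}\le p\beta_{\tau^{\otimes p}}$.

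The step that demands the most care is verifying that the weight grading on $U(L_p(V))$ is genuinely a grading of the associative algebra and is tracked correctly by $\omega$: for $x\in L_p^i(V)$ and $y\in L_p^j(V)$ the associative commutator $xy-yx=[x,y]$ lies in $L_p^{i+j}(V)$, hence again in weight degree $i+j$, so all reorderings produced inside $\omega(x_1\cdots x_n)$ preserve total weight and $\omega$ restricts to a weight-degree-preserving $K$-linear isomorphism from multihomogeneous elements of multidegree $(d_1,\ldots,d_p)$ to weight-degree $\sum_i i d_i$ elements of $U(L_p(V))$. Once this bookkeeping is in place, the rest of the argument is straightforward.
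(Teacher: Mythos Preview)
Your proposal is correct and follows essentially the same route as the paper. The paper packages the enveloping-algebra step into a separate statement (Theorem~\ref{thm:main2}) that works directly for any $\mathfrak{R}\subseteq\mathfrak{N}_p$, whereas you first reduce to $\mathfrak{R}=\mathfrak{N}_p$ via Reynolds and then apply Theorem~\ref{thm:enveloping}; and the paper tracks degrees through the $\mathbb{N}_0^p$-multigrading on $T(L_p(V))$ rather than the induced weight grading on $U(L_p(V))$---but these are cosmetic differences, and your verification that the weight grading on $U(L_p(V))$ is a genuine algebra grading (because $[L_p^i,L_p^j]\subseteq L_p^{i+j}$) is exactly the point the paper encodes in its choice of multigrading.
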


In fact the results of this paper give more: for any (not necessarily reductive) group $G\le GL(V)$  the construction of an explicit generating system of $F(\mathfrak{R},V)^G$
(where $\mathfrak{R}$ is contained in $\mathfrak{N}_p$ for some $p\ge 1$)
is reduced
to finding a generating system of a commutative algebra of $G$-invariants $S(W)^G$ for a $G$-module $W$ associated canonically to $V$.
For the precise statement see Theorem~\ref{thm:main2}.

The paper is organized as follows.
In Section~\ref{sec:enveloping} we prove Theorem~\ref{thm:enveloping},
which is then applied in Section~\ref{sec:lie-nilpotent} to prove Theorem~\ref{thm:main2}.
Finally, Theorem~\ref{thm:main1} is obtained as a consequence of Theorem~\ref{thm:main2}.

Since the technique of polarization in commutative invariant theory is fundamental for
Proposition~\ref{prop:beta-tau}, in Section~\ref{sec:weyl} we investigate to what extent it works in our noncommutative setup.
Theorem~\ref{thm:noncomm_weyl} is a noncommutative generalization of Weyl's theorem on polarization, however, it applies for
a class of varieties different from those appearing in Theorem~\ref{thm:DD:1996},
Theorem~\ref{thm:main1}, or Theorem~\ref{thm:main2}.
Section~\ref{sec:weyl} is logically independent from the previous sections.

%%%%%%%%%%%%%%%%%%%%%%%%%%%%%%%%%

\section{Universal enveloping algebras} \label{sec:enveloping}

Returning to the setup of the first paragraph of the Introduction, 
denote by $\pi_{U(L)}:T(L)\to U(L)$ the defining surjection onto $U(L)$ from the tensor algebra
$\displaystyle T(L)=\bigoplus_{d=0}^\infty T^d(L)$ (cf. \cite[2.1.1]{dixmier}),
and denote by $\pi_{S(L)}:T(L)\to S(L)$ the natural surjection onto the symmetric tensor algebra.
For $d=1,2,\dots$, define the linear map
\begin{equation}\label{eq:iota}
\iota_d:S^d(L)\to T^d(L),\quad \ell^d\mapsto \ell\otimes\cdots\otimes\ell \quad (\ell\in L)
\end{equation}
(or more explicitly, $\displaystyle \iota_d(\ell_1\cdots\ell_d)\mapsto \frac{1}{d!}
\sum_{\sigma\in\mathrm{Sym}_d}\ell_{\sigma(1)}\otimes \cdots \otimes \ell_{\sigma(d)}$). 
Write $\iota_0$ for the identity map of $S^0(V)=K=T^0(V)$. 

Introduce the notation
$\displaystyle U(L)_d:=\pi_{U(L)}(\bigoplus_{j=0}^dT^j(V))$, $d=0,1,2,\dots$.
Then
\[
U(L)_0\subseteq U(L)_1\subseteq U(L)_2\subseteq \cdots
\]
is the \emph{canonical filtration of $\displaystyle U(L)=\bigcup_{d=0}^{\infty}U(L)_d$} (cf. \cite[2.3.1]{dixmier}). 
Moreover, consider the linear map 
\[\omega_d:S^d(L)\to U(L) \quad \mbox{ given by } \quad \omega_d:=\pi_{U(L)}\circ\iota_d.\] 
Note that $\omega_d(S^d(L))$ is a $K$-vector space direct complement of $U(L)_{d-1}$ in $U(L)_d$ (see \cite[2.4.4. Proposition]{dixmier}). 
The direct sum 
\begin{equation}\label{eq:canonical-bijection} 
\omega:=\bigoplus_{d=0}^\infty \omega_d:S(L)=\bigoplus_{d=0}^{\infty}S^d(V)\to U(L)  
\end{equation} 
is a $K$-vector space isomorphism called the \emph{canonical bijection} in 
\cite[2.4.6]{dixmier}. Furthermore, we have the following: 

\begin{proposition}\label{prop:A-equivariant} 
The canonical  bijection $\omega:S(L)\to U(L)$  is an isomorphism of $\mathrm{Aut}(L)$-modules, where $\mathrm{Aut}(L)$ denotes the automorphism group of the Lie algebra $L$.  In particular, for any subgroup $G$ of $\mathrm{Aut}(L)$, the canonical bijection restricts to a $K$-vector space isomorphism 
\[\omega \vert_{S(L)^G}:S(L)^G\to U(L)^G.\]  
\end{proposition}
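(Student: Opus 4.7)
The plan is to check $\mathrm{Aut}(L)$-equivariance degree by degree, using the fact that all three ingredients of $\omega_d=\pi_{U(L)}\circ\iota_d$ behave well under the action. For $\phi\in\mathrm{Aut}(L)$, the tensor algebra functor yields an algebra automorphism $T(\phi)$ of $T(L)$ preserving the grading. Since $\phi$ respects the Lie bracket, $T(\phi)$ preserves the two-sided ideal generated by $\{x\otimes y-y\otimes x-[x,y]\mid x,y\in L\}$ as well as the ideal generated by $\{x\otimes y-y\otimes x\mid x,y\in L\}$, hence induces $K$-algebra automorphisms $U(\phi)$ of $U(L)$ and $S(\phi)$ of $S(L)$ such that $\pi_{U(L)}\circ T(\phi)=U(\phi)\circ\pi_{U(L)}$ and $\pi_{S(L)}\circ T(\phi)=S(\phi)\circ\pi_{S(L)}$. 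This gives meaning to the $\mathrm{Aut}(L)$-actions on $U(L)$ and $S(L)$.

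Next I would verify that each $\iota_d$ is $GL(L)$-equivariant, in particular $\mathrm{Aut}(L)$-equivariant. This is essentially by construction: from the defining formula $\iota_d(\ell^d)=\ell^{\otimes d}$ one gets
\[
\iota_d(S^d(\phi)(\ell^d))=\iota_d((\phi(\ell))^d)=\phi(\ell)^{\otimes d}=T^d(\phi)(\ell^{\otimes d})=T^d(\phi)(\iota_d(\ell^d)),
\]
and since $S^d(L)$ is spanned by pure powers $\ell^d$ (characteristic zero), we conclude $\iota_d\circ S^d(\phi)=T^d(\phi)\circ\iota_d$. Combining this with the previous paragraph,
\[
\omega_d\circ S^d(\phi)=\pi_{U(L)}\circ\iota_d\circ S^d(\phi)=\pi_{U(L)}\circ T^d(\phi)\circ\iota_d=U(\phi)\circ\pi_{U(L)}\circ\iota_d=U(\phi)\circ\omega_d.
\]
Taking the direct sum over $d$ shows $\omega\circ S(\phi)=U(\phi)\circ\omega$, so $\omega$ is an isomorphism of $\mathrm{Aut}(L)$-modules.

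For the "in particular" statement, observe that $\omega$ is already known to be a $K$-vector space isomorphism, and $\mathrm{Aut}(L)$-equivariance implies that it restricts to a bijection between the fixed-point subspaces for any subgroup $G\le\mathrm{Aut}(L)$: $f\in S(L)^G$ forces $\omega(f)=\omega(g\cdot f)=g\cdot\omega(f)$ for all $g\in G$, and conversely if $\omega(f)$ is $G$-fixed then $\omega(g\cdot f)=g\cdot\omega(f)=\omega(f)$ gives $g\cdot f=f$ by injectivity of $\omega$.

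There is no real obstacle here; the only point requiring a moment of care is the characteristic-zero use in the polarization identity $\iota_d(\ell^d)=\ell^{\otimes d}$, which guarantees that verifying equivariance on pure powers suffices to pin down $\iota_d$. Everything else is formal functoriality of $T$, $S$, and $U$ applied to a Lie algebra automorphism.
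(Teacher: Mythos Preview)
Your proof is correct and follows essentially the same approach as the paper's: decompose $\omega_d=\pi_{U(L)}\circ\iota_d$, observe that $\pi_{U(L)}$ is $\mathrm{Aut}(L)$-equivariant (the paper invokes universal properties, you check ideal-preservation---equivalent arguments) and that $\iota_d$ is $GL(L)$-equivariant, then restrict to $G$-invariants. Your version is simply more explicit where the paper is terse, in particular in verifying the $GL(L)$-equivariance of $\iota_d$ on pure powers.
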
 

\begin{proof} 
The universal properties of $T(L)$ and $U(L)$ imply that the action of $\mathrm{Aut}(L)$ on $L$ extends uniquely to an action via $K$-algebra automorphisms on $T(L)$ and $U(L)$, and  the algebra homomorphism  $\pi_{U(L)}$ is $\mathrm{Aut}(L)$-equivariant. 
The map $\iota_d:S^d(L)\to T^d(L)$ is even $GL(L)$-equivariant, and clearly 
$\mathrm{Aut}(L)\le GL(L)$. Therefore the composition $\omega_d=\pi_{U(L)}\circ \iota_d$ is $\mathrm{Aut}(L)$-equivariant for all $d$, implying in turn that $\omega$ is $\mathrm{Aut}(L)$-equivariant. Consequently, the $K$-linear isomorphism $\omega:S(L)\to U(L)$ is in fact an isomorphism of $G$-modules for any subgroup $G$ in $\mathrm{Aut}(L)$, and thus $\omega$ restricts to a linear isomorphism between the subspaces $S(V)^G\to U(L)^G$ of 
$G$-invariants.  
\end{proof} 

As $\omega$ is not an algebra homomorphism in general, further considerations are needed to prove Theorem~\ref{thm:enveloping}. 
Write $\eta_d:U(L)_d\to U(L)_d/U(L)_{d-1}$ for the natural surjection (with the convention
$U(L)_{-1}=0$), $\pi_{U(L),d}$ for the restriction of $\pi_{U(L)}$ to $T^d(L)$,
and $\pi_{S(L),d}$ for the restriction of $\pi_{S(L)}$ to $S^d(L)$.
The linear maps
$\eta_d,\pi_{U(L),d},\pi_{S(L),d}$ are all $\mathrm{Aut}(L)$-equivariant.
Since $\ker(\eta_d\circ \pi_{U(L),d})\supseteq \ker(\pi_{S(L),d})$
(see \cite[2.1.5. Lemma]{dixmier}), there exists a unique
$\mathrm{Aut}(L)$-module homomorphism
\[
\mu_d:S^d(L)\to U(L)_d/U(L)_{d-1}\quad \text{with} \quad \mu_d\circ \pi_{S(L),d}=\eta_d\circ\pi_{U(L),d}.
\]
Clearly $\eta_d\circ\pi_{U(L),d}\circ\iota_d\circ\pi_{S(L),d}=\eta_d\circ\pi_{U(L),d}$ (see for example \cite[2.1.5. Lemma]{dixmier}), hence  
in fact $\mu_d=\eta_d\circ\pi(U)_d\circ \iota_d=\eta_d\circ\omega_d$. 
Moreover, $\mu_d$ is a $K$-vector space isomorphism by the Poincar\'e-Birkhoff-Witt Theorem 
(or by \cite[2.4.4. Proposition]{dixmier}): 
\[
\begin{array}{ccc}
T^d(L) & \stackrel{\pi_{U(L),d}} \longrightarrow & U(L)_d \\
\downarrow{{\scriptstyle{\pi_{S(L),d}}}} &  & \downarrow{{\scriptstyle{\eta_d}}}\\
S^d(L)& \stackrel{\mu_d} \cong & U(L)_d/U(L)_{d-1} \end{array}
\]
So $\mu_d$ is an isomorphism of $\mathrm{Aut}(L)$-modules, and 
restricting to $G$-invariants (where $G$ is a subgroup of $\mathrm{Aut}(L)$) we obtain the following commutative diagram:
\begin{equation}\label{eq:diagram}
\begin{array}{ccc}
T^d(L)^G & \stackrel{\pi_{U(L),d}} \longrightarrow & U(L)_d^G \\
\downarrow{{\scriptstyle{\pi_{S(L),d}}}} &  & \downarrow{{\scriptstyle{\eta_d}}}\\
S^d(L)^G& \stackrel{\mu_d} \cong & (U(L)_d/U(L)_{d-1})^G \end{array}
\end{equation}

\begin{proofof}{Theorem~\ref{thm:enveloping}}
Since $S^d(L)^G$ is mapped isomorphically onto $(U(L)_d/U(L)_{d-1})^G$ by $\mu_d$, 
commutativity of the diagram \eqref{eq:diagram} implies 
\begin{equation}\label{eq:UU}
U(L)_d^G/U(L)_{d-1}^G=\eta_d(U(L)_d^G)=(U(L)_d/U(L)_{d-1})^G.
\end{equation}
Now let $\{f_{\lambda}\mid \lambda\in \Lambda\}$ be a homogeneous system of generators of $S(L)^G$.
The maps $\iota_d:S^d(L)\to T^d(L)$ defined by \eqref{eq:iota} are $GL(L)$-equivariant, hence they are $G$-equivariant as well. In particular,
\[
\widehat f_{\lambda}:=\iota_{\deg(f_{\lambda})}(f_{\lambda})\in T^d(L)^G \quad (\lambda\in \Lambda).
\]
Denote by $M$ the subalgebra of $T(L)$ generated by $\{\widehat f_{\lambda}\mid \lambda\in\Lambda\}$. 
By construction, $M$ is generated by homogeneous $G$-invariants, so $\displaystyle M=\bigoplus_{d=0}^{\infty}M_d$ is a graded subalgebra of $T(L)^G$.
Moreover, $E:=\pi_{U(L)}(M)$ is the subalgebra of $U(L)^G$ generated by
$\{\omega(f_{\lambda})\mid \lambda\in \Lambda\}$.  We have to show that
$E\supseteq U(L)^G$. The algebra $\displaystyle E=\bigcup_{d=0}^\infty E_d$ is filtered,  where
$\displaystyle E_d:=\pi_{U(L)}(\bigoplus_{j=0}^dM_d)$, $d=0,1,2,\dots$.
By induction on $d$ we show $E_d\supseteq U(L)_d^G$. For $d=0$ there is nothing to prove. Suppose $d>0$, and $E_{d-1}= U(L)_{d-1}^G$.
By \eqref{eq:diagram} and \eqref{eq:UU} we have the commutative diagram
\[
\begin{array}{ccc}
M_d & \stackrel{\pi_{U(L),d}} \longrightarrow & E_d \\
\downarrow{{\scriptstyle{\pi_{S(L),d}}}} &  & \downarrow{{\scriptstyle{\eta_d}}}\\
S^d(L)^G& \stackrel{\mu_d} \cong & U(L)_d^G/U(L)_{d-1}^G \end{array}
\]
The restriction of $\pi_{S(L)}$ to $M$ is a homomorphism of graded algebras 
$M\to S(L)^G$. 
By construction of $M$ the generators of $S(L)^G$ are contained in $\pi_{S(L)}(M)$. 
It follows that $\pi_{S(L)}(M)=S(L)^G$, and since $\pi_{S(L)}$ preserves the grading,  
we have $\pi_{S(L),d}(M_d)=S^d(L)^G$ for all $d$. 
It follows that 
\[\eta_d(E_d)\supseteq\eta_d(\pi_{U(L)}(M_d))=
\mu_d(\pi_{S(L),d}(M_d))=\mu_d(S^d(L)^G)=U(L)_d^G/U(L)_{d-1}^G,\] 
or, in other words,
$U(L)_d^G=E_d+U(L)_{d-1}^G$.
On the other hand, $E_d\supseteq E_{d-1}$,   and by the induction hypothesis
$E_{d-1}\supseteq U(L)_{d-1}^G$, so $U(L)_d^G=E_d+U(L)_{d-1}^G=E_d$.
We conclude  $\displaystyle E=\bigcup_{d=0}^{\infty} E_d=\bigcup_{d=0}^{\infty}U(L)_d^G=U(L)^G$.
\end{proofof} 

In the above proof we pointed out that the natural surjection $\eta_d:U(L)_d\to U(L)_d/U(L)_{d-1}$ maps $U(L)_d^G$ onto $(U(L)_d/U(L)_{d-1})^G$ (see \eqref{eq:UU}). 
For later use we state a general lemma guaranteeing that a surjective map 
restricts to a surjection between the corresponding subspaces of invariants. 

Given a reductive group $H$, recall that by a \emph{rational $H$-module}
we mean a not necessarily finite dimensional vector space $X$ together with an action of $H$ via linear transformations,
such that any $x\in X$ is contained in a finite dimensional
subspace $Y$ of $X$ which is stable under the action of $H$, i.e., $H\cdot Y=Y$,
and the group homomorphism $H\to GL(Y)$ giving the action of $H$ on $Y$ is a morphism of affine algebraic varieties.
We shall denote by $X^H$ the subspace of $H$-invariants in $X$.

\begin{lemma}\label{lemma:surjective}
Let $H$ be a reductive group and $\varphi:X\to Z$ a surjective homomorphism of rational
$H$-modules. Then for any subgroup $G$ of $H$ we have $Z^G=\varphi(X^G)$.
\end{lemma}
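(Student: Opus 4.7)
The inclusion $\varphi(X^G)\subseteq Z^G$ is immediate, since $\varphi$ is $H$-equivariant and hence $G$-equivariant: if $x\in X^G$ then $g\cdot\varphi(x)=\varphi(g\cdot x)=\varphi(x)$ for all $g\in G$. So the real content is the reverse inclusion $Z^G\subseteq \varphi(X^G)$, i.e.\ that every $G$-invariant in $Z$ admits a $G$-invariant lift to $X$.

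My plan is to exploit the reductivity of $H$ to produce an $H$-equivariant linear section of $\varphi$, which will then automatically be $G$-equivariant. In characteristic zero, a reductive group $H$ is linearly reductive, so every rational $H$-module is a (possibly infinite) direct sum of simple $H$-modules; equivalently, every $H$-submodule of a rational $H$-module has an $H$-stable direct complement. Applying this to the $H$-submodule $\ker(\varphi)\subseteq X$, I choose a rational $H$-submodule $X'\subseteq X$ with $X=\ker(\varphi)\oplus X'$.

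The restriction $\varphi|_{X'}:X'\to Z$ is then an isomorphism of rational $H$-modules, so its inverse $\sigma:=(\varphi|_{X'})^{-1}:Z\to X$ is $H$-equivariant, satisfies $\varphi\circ\sigma=\mathrm{id}_Z$, and in particular is $G$-equivariant for the subgroup $G\le H$. Given any $z\in Z^G$, the element $x:=\sigma(z)\in X$ satisfies $g\cdot x=g\cdot\sigma(z)=\sigma(g\cdot z)=\sigma(z)=x$ for every $g\in G$, hence $x\in X^G$, and $\varphi(x)=z$. This proves $Z^G\subseteq\varphi(X^G)$ and finishes the argument.

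The only subtle point is the complete reducibility of rational $H$-modules of possibly infinite dimension; this is a standard consequence of linear reductivity in characteristic zero (each element of $X$ lies in a finite-dimensional $H$-submodule, each such submodule is a direct sum of simple submodules, and so $X$ itself decomposes into isotypic components from which a complement to $\ker(\varphi)$ is constructed in the usual way). I expect this to be the only step worth flagging; everything else is formal.
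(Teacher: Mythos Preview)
Your proof is correct and follows essentially the same approach as the paper: both use complete reducibility of $H$ to split off an $H$-stable complement $X'$ (the paper calls it $Y$) to $\ker(\varphi)$, observe that $\varphi|_{X'}:X'\to Z$ is an $H$-module isomorphism and hence a $G$-module isomorphism, and conclude that $G$-invariants lift. Your explicit use of the inverse section $\sigma$ and your remark about the infinite-dimensional case are slight elaborations, but the argument is the same.
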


\begin{proof}
Since $H$ acts completely reducibly on $X$, the $H$-submodule $\ker(\varphi)$ has
an  $H$-module direct complement $Y$ in $X$, so $X=\ker(\varphi)\oplus Y$.
The restriction $\varphi\vert_Y:Y\to Z$ is an $H$-module isomorphism, hence it is a $G$-module isomorphism as well,
thus it restricts to a $K$-vector space isomorphism
$Y^G\to Z^G$. Clearly $Y^G\subseteq X^G$, so we derive $\varphi(X^G)\supseteq\varphi(Y^G)=Z^G$.
The reverse inclusion $\varphi(X^G)\subseteq Z^G$ holds for any $G$-module homomorphism.
\end{proof}

%%%%%%%%%%%%%%%%%%%%%%%%%%%%%%

\section{Lie nilpotent relatively free algebras}\label{sec:lie-nilpotent}

Fix an integer $p\ge 1$, a variety $\mathfrak{R}$
contained in $\mathfrak{N}_p$, and a finite dimensional rational $G$-module $V$ (where $G$ is an arbitrary linear algebraic group).
Without loss of generality we may assume that $G\le GL(V)$ (i.e., $V$ is a faithful $G$-module). Set $F:=F(\mathfrak{R},V)$.
Consider the subspaces in $T(V)$ given by
\[
V^{[d]}:=\mathrm{Span}_K\{[v_1,\dots,v_d]\mid v_1,\dots,v_d\in V\}
\]
for $d=1,2,\dots$, and $V^{[1]}:=V$.
As a consequence of the Jacobi identity we have
\begin{equation}\label{eq:jacobi}
[V^{[d]},V^{[e]}]\subseteq V^{[d+e]}.
\end{equation}
The subspace $V^{[d]}$ is a $GL(V)$-submodule in the $d^{\mathrm{th}}$ tensor power $V^{\otimes d}$ of $V$. Set
\[
V^{[\le p]}:=\bigoplus_{d=1}^p V^{[d]},
\]
and let  $T:=T(V^{[\le p]})$ be the tensor algebra generated by $V^{[\le p]}$.
Denote by $\varepsilon^i$ the $i^{\mathrm{th}}$ standard basis vector
$\varepsilon^i:=(0,\dots,0,1,0,\dots,0)\in\mathbb{N}_0^p$ (the coordinate $1$ is in the   $i^{\mathrm{th}}$ position). The algebra $T$
has an $\mathbb{N}_0^p$-grading
\begin{equation}\label{eq:multigrading}
T=\bigoplus_{\alpha\in\mathbb{N}_0^p} T_{\alpha},
\qquad T_{\varepsilon^i}=V^{[i]} \subseteq V^{[\le p]}
\text{ for }i=1,\dots,p.
\end{equation}
Note that $GL(V)$ acts on $T$ via $\mathbb{N}_0^p$-graded algebra automorphisms.
The symmetric tensor algebra $S:=S(V^{[\le p]})$ is endowed with the analogous $\mathbb{N}_0^p$-grading,
so the natural surjection $\pi_{S(L)}:T\to S$ is a $GL(V)$-equivariant homomorphism of $\mathbb{N}_0^p$-graded algebras.

The tautological linear embedding of $V^{[\le p]}$  into $T(V)$ extends to a $GL(V)$-equivariant algebra surjection $\pi:T\to T(V)$.
Composing this with the natural surjection $\nu:T(V)\to F(\mathfrak{R},V)$ we obtain
\begin{equation}\label{eq:pi_F}
\pi_F:=\nu\circ\pi:T\to F.
\end{equation}
A homogeneous element in $T$ may not be mapped to a homogeneous element of $T(V)$ by $\pi$ (or of $F$ by $\pi_F$).
However, the multihomogeneous component $T_{\alpha}\subset T$
is mapped under $\pi$ (respectively $\pi_F$) into the homogeneous component of $T(V)$
(respectively $F$) of degree $\displaystyle \sum_{i=1}^pi\alpha_i$.

For $\alpha\in\mathbb{N}_0^p$ write $\displaystyle |\alpha|:=\sum_{i=1}^p\alpha_i$.
Denote by $\iota_{\alpha}:S_{\alpha}\to T_{\alpha}$ the linear map given by
\[
\iota_{\alpha}(v_1\cdots v_{|\alpha|})=\frac{1}{|\alpha|!}\sum_{\sigma\in {\mathrm{Sym}}_{|\alpha|}}
v_{\sigma(1)}\otimes \cdots \otimes v_{\sigma(|\alpha|)}
\]
(where ${\mathrm{Sym}}_d$ stands for the symmetric group on $\{1,\dots,d\}$ and
$v_1,\dots, v_{|\alpha|}\in V^{[\le p]}$).
Obviously $\iota_{\alpha}$ is $GL(V)$-equivariant (as it is even $GL(V^{[\le p]})$-equivariant),
and hence $\iota_{\alpha}$ is $G$-equivariant (just like $\pi_{S(L)}$).
Thus $\iota_{\alpha}(S_{\alpha}^G)\subseteq T_{\alpha}^G$. Moreover,
$\pi_{S(L)}\circ \iota_{\alpha}$ is the identity map on $S_{\alpha}$,
consequently
$\pi_{S(L)}(\iota_{\alpha}(S_{\alpha}^G))=S_{\alpha}^G$. In particular,
$\pi_{S(L)}(T_{\alpha}^G)=S_{\alpha}^G$.

\begin{theorem}\label{thm:main2}
Let $\mathfrak{R}$ be a variety contained in $\mathfrak{N}_p$ for some $p\ge 1$.
Let $V$ be a $G$-module,  and $S$, $T$, $F$ the algebras defined above.
Take a multihomogeneous (with respect to the $\mathbb{N}_0^p$-grading \eqref{eq:multigrading})
$K$-algebra generating system $\{f_{\lambda}\mid \lambda\in \Lambda\}$  of $S^G$, denote by $\alpha_{\lambda}$  the multidegree of $f_{\lambda}$,  and set
\[
\widehat f_{\lambda}:=\iota_{\alpha_{\lambda}}(f_{\lambda})\in T_{\alpha_{\lambda}}^G
 \quad (\lambda\in\Lambda).\]
Then the $K$-algebra $F^G$ is generated by the homogeneous elements
$\{\pi_F(\widehat f_{\lambda})\mid \lambda\in\Lambda\}$.
Furthermore,
\[
\deg(\pi_F(\widehat f_{\lambda}))=\displaystyle \sum_{i=1}^pi\alpha_{\lambda,i}.
\]
\end{theorem}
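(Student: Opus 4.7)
The plan is to mimic the proof of Theorem~\ref{thm:enveloping}, treating the length filtration $T_{(n)}:=\bigoplus_{|\alpha|\le n}T_\alpha$ on $T$ as the analogue of the canonical filtration on a universal enveloping algebra and pushing it along $\pi_F$ to a filtration $F_{(n)}:=\pi_F(T_{(n)})$ on $F$. Two ingredients drive the argument: a PBW-type observation that commutators among elements of $V^{[\le p]}$ strictly drop the filtration when pushed to $F$, and Lemma~\ref{lemma:surjective} applied to the reductive overgroup $GL(V)$, which circumvents the fact that $G$ itself need not be reductive.

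Let $M$ be the subalgebra of $T$ generated by $\{\widehat f_\lambda\mid\lambda\in\Lambda\}$. Because each $\widehat f_\lambda$ is $G$-invariant and multihomogeneous, $M\subseteq T^G$ and inherits the $\mathbb{N}_0^p$-grading $M=\bigoplus_\alpha M_\alpha$. Set $E:=\pi_F(M)\subseteq F^G$ and $E_{(n)}:=\pi_F(\bigoplus_{|\alpha|\le n}M_\alpha)$. Since $V=V^{[1]}$ generates $F$ and lies in $F_{(1)}$, we have $F=\bigcup_n F_{(n)}$. The goal is to prove $E_{(n)}=F_{(n)}^G$ for every $n\ge 0$ by induction on $n$, which yields $F^G=E$.

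For the PBW-type ingredient: the kernel of $\pi_{S(L)}|_{T_\alpha}$ is spanned by multihomogeneous elements $u\otimes(a\otimes b-b\otimes a)\otimes w$ with $a\in V^{[i]}$, $b\in V^{[j]}$, and under $\pi_F$ the middle factor becomes $[a,b]\in V^{[i+j]}$, which either lies in $V^{[\le p]}\subseteq F_{(1)}$ (when $i+j\le p$) or vanishes in $F$ (when $i+j>p$) by Lie nilpotency. Either way the total length drops by one, so $\pi_F$ carries this kernel into $F_{(|\alpha|-1)}$. Using the $G$-equivariant decomposition $T_\alpha=\iota_\alpha(S_\alpha)\oplus\ker(\pi_{S(L)}|_{T_\alpha})$, it follows that $\eta_n\circ\pi_F|_{T_\alpha}=\bar\mu_\alpha\circ\pi_{S(L)}|_{T_\alpha}$ where $\bar\mu_\alpha:=\eta_n\circ\pi_F\circ\iota_\alpha$ and $\eta_n:F_{(n)}\to F_{(n)}/F_{(n-1)}$ is the quotient. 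Assembling the $\bar\mu_\alpha$ for $|\alpha|=n$ produces a $GL(V)$-equivariant linear surjection
\[
\bar\mu_n:\bigoplus_{|\alpha|=n}S_\alpha\twoheadrightarrow F_{(n)}/F_{(n-1)}.
\]

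For the inductive step, fix $f\in F_{(n)}^G$. Applying Lemma~\ref{lemma:surjective} to $\bar\mu_n$ and the subgroup $G\le GL(V)$ produces $s=\sum_{|\alpha|=n}s_\alpha\in\bigoplus_{|\alpha|=n}S_\alpha^G$ with $\bar\mu_n(s)=\eta_n(f)$. Since $\pi_{S(L)}(\widehat f_\lambda)=f_\lambda$ and $\{f_\lambda\}$ generates $S^G$, we have $\pi_{S(L)}(M_\alpha)=S_\alpha^G$; lifting each $s_\alpha$ to some $m_\alpha\in M_\alpha$ and setting $e:=\pi_F(\sum_\alpha m_\alpha)\in E_{(n)}$, the factorization $\eta_n\circ\pi_F|_{T_\alpha}=\bar\mu_\alpha\circ\pi_{S(L)}|_{T_\alpha}$ yields $\eta_n(e)=\bar\mu_n(s)=\eta_n(f)$, so $f-e\in F_{(n-1)}\cap F^G=F_{(n-1)}^G=E_{(n-1)}\subseteq E$ by induction, proving $f\in E$. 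The degree statement is immediate from $\pi_F(T_{\alpha_\lambda})\subseteq F_{\sum_i i\alpha_{\lambda,i}}$, as recorded before the theorem. I expect the PBW-type step to be the main obstacle: it is precisely where the hypothesis $\mathfrak{R}\subseteq\mathfrak{N}_p$ enters, since for $d>p$ the space $V^{[d]}$ must vanish in $F$ in order for commutators to genuinely drop the length filtration.
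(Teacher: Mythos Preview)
Your argument is correct. The paper, however, organizes the proof differently: it recognizes that $V^{[\le p]}$ is the underlying vector space of the free nilpotent Lie algebra $L_p(V)$, so that $T=T(L_p(V))$ and $S=S(L_p(V))$, and then simply \emph{applies} Theorem~\ref{thm:enveloping} as a black box to obtain generators of $U(L_p(V))^G$. The passage to $F^G$ is a single step at the end: the identity on $V$ induces a $GL(V)$-equivariant algebra surjection $\gamma:U(L_p(V))\to F$, and one invocation of Lemma~\ref{lemma:surjective} (with $H=GL(V)$) gives $\gamma(U(L_p(V))^G)=F^G$.

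What you do instead is fuse the proof of Theorem~\ref{thm:enveloping} with this surjection step, running the filtration-and-induction argument directly on $F$. In Theorem~\ref{thm:enveloping} the map $\mu_d$ is an isomorphism by PBW, which immediately yields $(U(L)_d/U(L)_{d-1})^G=U(L)_d^G/U(L)_{d-1}^G$; for your $\bar\mu_n$ you only have a surjection, so you compensate by invoking Lemma~\ref{lemma:surjective} inside every inductive step rather than once at the end. Your PBW-type observation (commutators of $V^{[i]}$ with $V^{[j]}$ either land in $V^{[i+j]}\subseteq V^{[\le p]}$ or vanish in $F$) is exactly what, in the paper's organization, makes $\gamma$ well defined as an algebra homomorphism out of $U(L_p(V))$. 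The paper's route is more modular and explains why Theorem~\ref{thm:enveloping} is worth isolating; your route avoids introducing $U(L_p(V))$ altogether and is slightly more self-contained.
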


\begin{remark}
The algebra $S^G$ is known to be finitely generated (in addition to the case when $G$ is reductive)
also when  $G$ is a maximal unipotent subgroup of a reductive group (see \cite{hadziev} or \cite[Theorem 9.4]{grosshans}),
and consequently when $G$  is a Borel subgroup of a reductive group.
In these cases Theorem~\ref{thm:main2} reduces the construction of a finite generating system of $F^G$
to the construction of a finite generating system in the commutative algebra of invariants $S^G$.
\end{remark}

\begin{proofof}{Theorem~\ref{thm:main2}}
The subspace $\displaystyle \bigoplus_{d=1}^\infty V^{[d]}$ of $T(V)$ is a Lie subalgebra
(in fact it is the free Lie algebra generated by $\dim_K(V)$ elements).
It contains the Lie ideal $\displaystyle \bigoplus_{d>p}V^{[d]}$, and the corresponding factor
Lie algebra $L_p(V)$ is the relatively free nilpotent Lie algebra of nilpotency index $p$ generated by $\dim_K(V)$ elements.
We identify the underlying vector space of $L_p(V)$ with $\displaystyle \bigoplus_{d=1}^pV^{[d]}$ in the obvious way.
Then $T$ is identified with $T(L_p(V))$ and $S$ is identified with $S(L_p(V))$. 
Applying Theorem~\ref{thm:enveloping} for the Lie algebra $L_p(V)$ and the group $G\le GL(V)$  (note that $GL(V)$ is naturally a subgroup of the automorphism group 
$\mathrm{Aut}(L_p(V))$ of the Lie algebra $L_p(V)$) we obtain that
\begin{equation}\label{eq:widehat}
\{\pi_{U(L_p(V))}(\widehat f_{\lambda})\mid\lambda\in \Lambda\}
\end{equation}
is a generating system of $U(L_p(V))^G$.
By relative freeness of $L_p(V)$ in the variety $\mathfrak{N}_p$, the identity map $V\to V$
extends to a (surjective) Lie algebra homomorphism from $L_p(V)$ onto the Lie subalgebra of $F$ generated by $V$.
Now the universal property of the universal enveloping algebra implies that this Lie algebra homomorphism extends
to an associative algebra homomorphism $\gamma:U(L_p(V))\to F$. 
The image of $\gamma$ contains the $K$-algebra generating system $V\subset F$, hence $\gamma$ is surjective onto $F$. It is also $GL(V)$-equivariant,
therefore by Lemma~\ref{lemma:surjective} we have
$\gamma(U(L_p(V))^G)=F^G$.
It follows that the generating system \eqref{eq:widehat} of $U(L_p(V))^G$ is mapped by
$\gamma$ to a generating system of $F^G$.
On the other hand, by construction $\pi_F$ and $\gamma\circ \pi_{U(L_p(V))}$ agree 
on the subspace $V^{[\le p]}=L_p(V)$ of $T$ generating $T$ as a $K$-algebra, hence 
\[
\pi_F=\gamma\circ \pi_{U(L_p(V))}:T=T(L_p(V))\to F.
\]
Thus $\pi_F(\widehat f_{\lambda})=\gamma(\pi_{U(L_p(V))}(\widehat f_{\lambda}))$, and these elements generate $F^G$ as $\lambda$ varies over $\Lambda$.
\end{proofof}

\begin{proofof}{Theorem~\ref{thm:main1}}
Let $V$ be a $G$-module in $\mathrm{add}(\tau)$. Now $G$ is reductive, so we may take a finite minimal multihomogeneous
(with respect to the $\mathbb{N}_0^p$-grading \eqref{eq:multigrading}) generating system $\{f_{\lambda}\mid \lambda \in \Lambda\}$
of $S(V^{[\le p]})^G$.
Consider the
generating system $\{\pi_F(\widehat f_{\lambda})\mid \lambda\in\Lambda\}$ of $F(\mathfrak{R},V)^G$ given by Theorem~\ref{thm:main2}.
Note that $\{f_{\lambda}\mid \lambda \in \Lambda\}$ is a minimal homogeneous generating system of $S^G$ with respect to the standard grading,
where $V^{[\le p]}$ is the degree $1$ homogeneous component of $S$.
Since $G$ is reductive, all $G$-modules are completely reducible. Moreover, $V^{[q]}$ is a $GL(V)$-module
(hence $G$-module) direct summand in $V^{\otimes q}$. It follows that
$V^{[\le p]}$ belongs to
$\mathrm{add}(\tau^{\otimes p})$, implying that $\deg(f_\lambda)=\sum_{i=1}^p\alpha_{\lambda,i}\le\beta_{\tau^{\otimes p}}$ for each $\lambda\in \Lambda$
(recall that the multidegree of $f_{\lambda}$ is
$\alpha_{\lambda}=(\alpha_{\lambda,1},\dots,\alpha_{\lambda,p})$, and $\deg(f_{\lambda})$ here stands for the degree of $f_{\lambda}$ with respect to the standard grading on $S$, for which the generators have degree $1$).
So we have
\[
\deg(\pi_F(\widehat f_{\lambda}))=\sum_{i=1}^pi\alpha_{\lambda,i}\le p\sum_{i=1}^p\alpha_{\lambda,i}
=p\deg(f_{\lambda})\le p\beta_{\tau^{\otimes p}}.
\]
In other words,  $F(\mathfrak{R},V)^G$ is generated by elements of degree at most
$p\beta_{\tau^{\otimes p}}$. This holds for any $G$-module $V\in\mathrm{add}(\tau)$, therefore the desired inequality $\beta_{\tau,\mathfrak{R}}\le p\beta_{\tau^{\otimes p}}$ holds.
\end{proofof}

\begin{remark} In the proof of Theorem~\ref{thm:main1} we used that all simple $G$-submodules of $V^{[q]}$ are contained in $V^{\otimes  q}$, because
$V^{[q]}$ is a $GL(V)$-submodule of $V^{\otimes q}$. We mention that it was proved by
Klyachko
\cite{klyachko} that for $q\neq 4,6$, all simple $GL(V)$-submodules of $V^{\otimes q}$ different
from the $q^{\mathrm{th}}$ exterior or symmetric powers of $V$ are contained in $V^{[q]}$.
\end{remark}

Following \cite{domokos-drensky:2016} for a finite group $G$ we set
\[
\beta(G,\mathfrak{R})=\sup\{\beta(F(\mathfrak{R},V)^G\mid V\text{ is a }G\text{-module}\}.
\]

\begin{corollary}\label{cor:finite} For a finite group $G$ we have
\[
\beta(G,\mathfrak{N}_p)\le p\beta(G).
\]
\end{corollary}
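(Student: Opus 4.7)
The plan is to reduce the corollary directly to Theorem~\ref{thm:main1} by choosing $\tau$ to be the set of isomorphism classes of \emph{all} simple $G$-modules. Since $G$ is finite, $\tau$ is a finite set (its cardinality is the number of conjugacy classes in $G$), so Theorem~\ref{thm:main1} is applicable (note that a finite group is reductive in characteristic zero).

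First I would verify the two identifications of suprema. On the one hand, every finite dimensional $G$-module is completely reducible and decomposes into simple summands all of whose isomorphism classes lie in $\tau$, hence $\mathrm{add}(\tau)$ coincides with the class of all finite dimensional $G$-modules. Consequently
\[
\beta_{\tau,\mathfrak{N}_p}=\beta(G,\mathfrak{N}_p) \qquad\text{and}\qquad \beta_\tau=\beta(G).
\]
On the other hand, for any $G$-modules $V_1,\dots,V_q\in\tau$, the tensor product $V_1\otimes\cdots\otimes V_q$ is again a $G$-module, and its simple summands are among the members of $\tau$. Therefore $\tau^{\otimes p}\subseteq\tau$, and in particular $\beta_{\tau^{\otimes p}}\le\beta_{\tau}=\beta(G)$.

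Applying Theorem~\ref{thm:main1} with $\mathfrak{R}=\mathfrak{N}_p$ and this choice of $\tau$ then yields
\[
\beta(G,\mathfrak{N}_p)=\beta_{\tau,\mathfrak{N}_p}\le p\,\beta_{\tau^{\otimes p}}\le p\,\beta(G),
\]
which is the desired inequality. There is no genuine obstacle here; the only thing to check carefully is that the supremum defining $\beta(G,\mathfrak{N}_p)$ over arbitrary $G$-modules does match $\beta_{\tau,\mathfrak{N}_p}$ for the full collection $\tau$ of simple $G$-modules, which follows from complete reducibility of rational representations of a finite group in characteristic zero (Maschke). Finiteness of $\beta(G)$ itself is classical (Noether's bound $\beta(G)\le |G|$), so the right-hand side $p\,\beta(G)$ is finite.
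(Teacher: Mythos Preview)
Your argument is correct and is exactly the intended derivation: the paper states the corollary without a separate proof because it follows immediately from Theorem~\ref{thm:main1} by taking $\tau$ to be the (finite) set of all isomorphism classes of simple $G$-modules, so that $\tau^{\otimes p}=\tau$, $\beta_{\tau}=\beta(G)$, and $\beta_{\tau,\mathfrak{N}_p}=\beta(G,\mathfrak{N}_p)$.
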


\begin{remark}
The results of \cite{domokos-drensky:2016} provide an upper bound for
$\beta(G,\mathfrak{N}_p)$ of different nature.
\end{remark}

%%%%%%%%%%%%%%%%%%%%%%%%%%%%%

\section{A noncommutative generalization of Weyl's theorem}\label{sec:weyl}

In this section we assume that the variety $\mathfrak{R}$ is generated (in the sense of universal algebra) by a finitely generated algebra.
Kemer \cite{kemer} proved that then
$\mathfrak{R}$ satisfies a Capelli identity. This means that there exists a positive integer
$h=h(\mathfrak{R})$ such that all algebras in $\mathfrak{R}$ satisfy the polynomial identity
\begin{align}\label{eq:capelli}
\sum_{\pi\in \mathrm{Sym}_{h+1}} (-1)^{\pi}x_{\pi(1)}y_1x_{\pi(2)}y_2\cdots y_hx_{\pi(h+1)}=0.
\end{align}

Our focus is on the case when $V=U+mW$, where
\[
mW=W+\cdots+W
\]
is the direct sum of $m$ copies of a $G$-module $W$, and $U$ is a $G$-module.
We shall identify $mW$ with $W\otimes K^m$, which is naturally a $GL(W)\times GL(K^m)$-module
(here $K^m$ stands for the space of column vectors of length $m$ over $K$).
Also $V=U+mW$ is naturally a $GL(U)\times GL(W)\times GL(K^m)$-module.
Moreover, for $l\le m$ we shall identify $K^l$ with the subspace of $K^m$ consisting of column vectors whose last $m-l$ coordinates are zero.
Accordingly $lW$ is identified with the subspace of $mW$ consisting of $m$-tuples of elements of $W$ with the zero vector as the last $m-l$ component.
In this way $F(\mathfrak{R},U+lW)$ becomes a subalgebra of
$F(\mathfrak{R},U+mW)$.

\begin{theorem}\label{thm:noncomm_weyl}
Let $G$ be a group, let $U$ and $W$ be $G$-modules, $n=\dim(W)$, and let $B$ be a set of generators of the algebra $F(\mathfrak{R},U+nhW)^G$,
where $h=h(\mathfrak{R})$ as above.
Then for any $m\ge nh$ the algebra $F(\mathfrak{R},U+mW)^G$ is generated by
\[
\{g\cdot f\mid g\in GL(K^m), \ f\in B\}.
\]
\end{theorem}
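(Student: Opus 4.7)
The strategy is to use the commuting $G$- and $GL(K^m)$-actions on $F(\mathfrak{R}, U+mW)$ together with the $GL(K^m)$-isotypic decomposition of $F(\mathfrak{R}, U+mW)^G$. Throughout, I write $\mathbb{S}^\lambda(K^m)$ for the Schur module (simple polynomial $GL(K^m)$-representation) attached to a partition $\lambda$, and $[\lambda]$ for the Specht $\mathrm{Sym}_q$-module.

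First I would set up the $GL(K^m)$-action: $GL(K^m)$ acts on $mW = W\otimes K^m$ through the second factor, commutes with $G$, and extends to graded $K$-algebra automorphisms of $F(\mathfrak{R}, U+mW)$ (the T-ideal defining $\mathfrak{R}$ being invariant under algebra endomorphisms). By reductivity of $GL(K^m)$, this gives a decomposition $F(\mathfrak{R}, U+mW)^G \cong \bigoplus_\lambda \mathbb{S}^\lambda(K^m) \otimes M_\lambda$ into $GL(K^m)$-isotypic components. The plan has two parts: first show $M_\lambda=0$ whenever $\ell(\lambda)>nh$, and then conclude. For the latter, whenever $\ell(\lambda)\le nh$ the simple module $\mathbb{S}^\lambda(K^m)$ is $GL(K^m)$-generated by its weight vectors supported on the first $nh$ coordinates of $K^m$; any such weight vector inside $\mathbb{S}^\lambda(K^m)\otimes M_\lambda$ belongs to $F(\mathfrak{R}, U+nhW)^G$, which by hypothesis is generated as a $K$-algebra by $B$. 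Since $GL(K^m)$ acts by algebra automorphisms, the $K$-algebra generated by $GL(K^m)\cdot B$ contains every isotypic $\mathbb{S}^\lambda(K^m)\otimes M_\lambda$ with $\ell(\lambda)\le nh$, hence all of $F(\mathfrak{R}, U+mW)^G$ once the length bound is established.

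The length bound $\ell(\lambda)\le nh$ is the main step and principal obstacle. I would prove it by identifying the $W$-degree-$q$ component of $F(\mathfrak{R}, U+mW)$, via polarization/restitution, with $Q_q\otimes_{\mathrm{Sym}_q}(K^m)^{\otimes q}$ as $GL(K^m)\times GL(W)$-module, where $Q_q$ is the $W$-multilinear multidegree-$(1,\ldots,1)$ part of $F(\mathfrak{R}, U+qW)$ regarded as a $\mathrm{Sym}_q\times GL(W)$-module via the permutation action of $\mathrm{Sym}_q$ on the $q$ copies of $W$. A further standard identification factors this as $Q_q\cong P_q\otimes_K W^{\otimes q}$ with diagonal $\mathrm{Sym}_q$-action, where $P_q$ is the classical $\mathrm{Sym}_q$-module of multilinear polynomials in $q$ variables modulo multilinear identities of $\mathfrak{R}$. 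Three inputs then combine: (i) by Kemer's theorem $\mathfrak{R}$ satisfies a Capelli identity $c_{h+1}=0$, which forces every $\mathrm{Sym}_q$-constituent $[\eta]$ of $P_q$ to satisfy $\ell(\eta)\le h$; (ii) Schur-Weyl duality on $W^{\otimes q}$ gives $\ell(\mu)\le n$ for every $\mathrm{Sym}_q$-constituent $[\mu]$; (iii) the Kronecker bound $\ell(\nu)\le\ell(\eta)\ell(\mu)$ for constituents $[\nu]$ of $[\eta]\otimes[\mu]$, which I would prove via the embedding $[\eta]\otimes[\mu]\hookrightarrow (K^{\ell(\eta)}\otimes K^{\ell(\mu)})^{\otimes q}$ together with the Schur-Weyl observation that $[\nu]$ appears in $(K^N)^{\otimes q}$ only when $\ell(\nu)\le N$. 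Combining these with the coinvariant identity $[\nu]\otimes_{\mathrm{Sym}_q}[\lambda]\cong\delta_{\nu,\lambda}K$ finally yields $\ell(\lambda)\le nh$ on the $GL(K^m)$-isotypic components.
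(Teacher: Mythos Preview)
Your overall architecture matches the paper's: exploit the commuting $GL(K^m)$-action, prove that every simple $GL(K^m)$-constituent $\mathbb{S}^\lambda(K^m)$ of $F(\mathfrak{R},U+mW)^G$ has $\ell(\lambda)\le nh$, and then observe that the highest weight vectors (or, as you say, weight vectors supported on the first $nh$ coordinates) of such constituents already lie in $F(\mathfrak{R},U+nhW)^G$. The difference is entirely in how the length bound is obtained. The paper works with the $GL(V)$-module structure on $F(\mathfrak{R},V)$ for $V=U+mW$: Regev's theorem gives $\ell(\lambda)\le h$ for the $GL(V)$-constituents $S^\lambda(V)$, and then a short combinatorial lemma (Pieri's rules plus the Cauchy identity) bounds the $GL(K^m)$-constituents of each $S^\lambda(U+nK^m)$ by $\ell(\mu)\le n\,\ell(\lambda)$. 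Your route via polarization, the $\mathrm{Sym}_q$-module structure, and the Kronecker length bound $\ell(\nu)\le\ell(\eta)\ell(\mu)$ is a legitimate alternative---in effect the Schur--Weyl dual of the paper's computation---and your proof of the Kronecker bound via the embedding $[\eta]\otimes[\mu]\hookrightarrow (K^{\ell(\eta)}\otimes K^{\ell(\mu)})^{\otimes q}$ is clean.

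One point needs repair: the factorization $Q_q\cong P_q\otimes_K W^{\otimes q}$ is only correct when $U=0$. With $U$ present, the $W$-multilinear part of $F(\mathfrak{R},U+qW)$ is rather $\tilde P_q\otimes_K W^{\otimes q}$, where $\tilde P_q$ is the part of $F(\mathfrak{R},U+K^q)$ multilinear in the $q$ auxiliary variables (so $\tilde P_q$ still carries the full $U$-dependence and is not the classical cocharacter module $P_q$). Your step (i) must then assert that every $\mathrm{Sym}_q$-constituent $[\eta]$ of $\tilde P_q$ has $\ell(\eta)\le h$. This remains true, directly from the Capelli identity $c_{h+1}=0$: the Young symmetrizer for any shape with a column of length exceeding $h$ involves an alternation in $h+1$ of the auxiliary variables, and this vanishes in $F(\mathfrak{R},U+K^q)$. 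With that fix in place your argument goes through.
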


Recall that the relatively free algebra $F(\mathfrak{R},V)$ is graded such that the subspace $V$ is the degree $1$ homogeneous component.

\begin{corollary}\label{cor:beta}
In the scenario of Theorem~\ref{thm:noncomm_weyl} for all positive integers $m$ we
have the inequality
\[
\beta(F(\mathfrak{R},U+mW)^G)\le \beta(F(\mathfrak{R},U+nhW)^G).
\]
\end{corollary}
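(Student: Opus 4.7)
The plan is to reduce the inequality to two cases: $m\ge nh$, where Theorem~\ref{thm:noncomm_weyl} applies directly, and $m<nh$, which I would handle via a $G$-equivariant graded projection. Throughout, fix a minimal homogeneous generating system $B$ of $F(\mathfrak{R},U+nhW)^G$ with respect to the standard grading, so that each element of $B$ has degree at most $\beta_0:=\beta(F(\mathfrak{R},U+nhW)^G)$.

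For $m\ge nh$, I would apply Theorem~\ref{thm:noncomm_weyl} verbatim: the set $\{g\cdot f\mid g\in GL(K^m),\ f\in B\}$ generates $F(\mathfrak{R},U+mW)^G$. The key observation is that the $GL(K^m)$-action on $U+mW=U\oplus(W\otimes K^m)$, which is trivial on $U$ and acts as $1\otimes g$ on $W\otimes K^m$, sits inside $GL(U+mW)$ and therefore extends to an action on $F(\mathfrak{R},U+mW)$ by graded algebra automorphisms. Consequently $\deg(g\cdot f)=\deg(f)\le\beta_0$ for all $g\in GL(K^m)$ and $f\in B$, yielding the required bound.

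For $m<nh$, I would exploit the $G$-equivariant linear projection $\varphi:U+nhW\twoheadrightarrow U+mW$ that is the identity on $U$ and on the first $m$ summands $W$ and zero on the remaining $nh-m$ copies. By the defining universal property of the relatively free algebra, $\varphi$ extends to a $G$-equivariant graded algebra surjection $\pi:F(\mathfrak{R},U+nhW)\twoheadrightarrow F(\mathfrak{R},U+mW)$. With the natural identifications, the subspace $U+mW$ of generators of $F(\mathfrak{R},U+mW)$ is contained in $U+nhW$, and $\varphi$ is the identity there, so $\pi$ is the identity on the subalgebra $F(\mathfrak{R},U+mW)\subseteq F(\mathfrak{R},U+nhW)$. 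Now any $f\in F(\mathfrak{R},U+mW)^G$ is also a $G$-invariant in $F(\mathfrak{R},U+nhW)$, so it can be written as a noncommutative polynomial in the elements of $B$; applying $\pi$ gives $f=\pi(f)$ as the same polynomial in $\pi(B)$. Since $\pi$ is $G$-equivariant and grading-preserving, $\pi(B)$ is a generating set of $F(\mathfrak{R},U+mW)^G$ consisting of homogeneous invariants of degree at most $\beta_0$, as desired.

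I do not anticipate a serious obstacle: granted Theorem~\ref{thm:noncomm_weyl}, both cases reduce to routine grading considerations. The only small point worth checking carefully is that $\pi$ genuinely restricts to the identity on the subalgebra $F(\mathfrak{R},U+mW)$, which follows at once from the definition of $\varphi$ on the generating subspace $U+mW$ together with the uniqueness clause in the universal property.
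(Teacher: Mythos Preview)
Your argument is correct and matches what the paper leaves implicit: the corollary is stated without proof, since for $m\ge nh$ it follows immediately from Theorem~\ref{thm:noncomm_weyl} together with the fact that $GL(K^m)$ acts on $F(\mathfrak{R},U+mW)$ by graded automorphisms, while for $m<nh$ it follows from the $G$-equivariant graded surjection obtained by annihilating the extra copies of $W$ (the same device used in the proof of Proposition~\ref{prop:beta-tau}). Your treatment of both cases is exactly along these lines.
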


\begin{remark}\label{remark:weyl}
In the special case when $\mathfrak{R}$ is the variety of commutative algebras
we have $h(\mathfrak{R})=1$, and hence Theorem~\ref{thm:noncomm_weyl} in this special case recovers Weyl's theorem on polarization, which is a main theme in
\cite{weyl}.
\end{remark}

In order to prove Theorem~\ref{thm:noncomm_weyl} we need to recall some facts about polynomial representations of the general linear group.
A {\it partition} $\lambda$ of $d$ (we write $\lambda\vdash d$) is a finite non-increasing sequence
$\lambda=(\lambda_1,\lambda_2,\dots)$ of non-negative integers
(with the convention that zeros can be freely appended to or removed from the end) such that $\displaystyle \sum_i{\lambda_i}=d$.
Write
$\mathrm{ht}(\lambda)$ for the number of non-zero components of $\lambda$.
Denote by $S^{\lambda}(.)$ the Schur functor (see for example \cite{procesi}), so
for  a finite dimensional $K$-vector space $E$ and a partition $\lambda$ with $\mathrm{ht}(\lambda)\le \dim(E)$, we have that
$S^{\lambda}(E)$ is a simple polynomial $GL(E)$-module.

\begin{lemma}\label{lemma:pieri}
If the simple $GL(E)$-module $S^{\mu}(E)$ occurs as a direct summand in $S^{\lambda}(U+nE)$,
where $n$ is a positive integer and $GL(E)$ acts trivially on the finite dimensional $K$-vector space $U$,
then $\mathrm{ht}(\mu)\le n\cdot \mathrm{ht}(\lambda)$.
\end{lemma}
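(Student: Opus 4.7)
The plan is to translate the question into symmetric functions and use the Jacobi--Trudi identity. Since a polynomial $GL(E)$-module is determined by its character, showing that $S^\mu(E)$ appears as a summand of $S^\lambda(U+nE)$ amounts to showing that $s_\mu(x_1,\dots,x_N)$ occurs with non-zero coefficient in
\[
s_\lambda\bigl(\underbrace{1,\dots,1}_{\dim(U)},\underbrace{x_1,\dots,x_1}_{n},\dots,\underbrace{x_N,\dots,x_N}_{n}\bigr),
\]
with $N=\dim(E)$; this is the value of $s_\lambda$ at the eigenvalues of $\mathrm{diag}(x_1,\dots,x_N)\in GL(E)$ acting on $U\oplus nE$. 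So the goal is to bound the heights of the partitions appearing with non-zero coefficient in this Schur expansion.

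First I would handle the row case $\lambda=(k)$. By the Cauchy decomposition
\[
S^k(E\otimes K^n)\cong\bigoplus_{\nu\vdash k,\ \mathrm{ht}(\nu)\le n}S^\nu(E)\otimes S^\nu(K^n),
\]
combined with $S^k(U+nE)\cong\bigoplus_{j=0}^{k} S^{k-j}(U)\otimes S^j(nE)$, the $GL(E)$-module $S^k(U+nE)$ is a direct sum of copies of $S^\nu(E)$ with $\mathrm{ht}(\nu)\le n$. Equivalently, $h_k(1^{\dim(U)},x_1^n,\dots,x_N^n)$ is a non-negative integer combination of Schur polynomials $s_\nu(x_1,\dots,x_N)$ with $\mathrm{ht}(\nu)\le n$.

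For general $\lambda$ with $r=\mathrm{ht}(\lambda)$, I apply the Jacobi--Trudi identity
\[
s_\lambda=\det\bigl(h_{\lambda_i-i+j}\bigr)_{1\le i,j\le r}.
\]
Evaluating at $y=(1^{\dim(U)},x_1^n,\dots,x_N^n)$ expresses $s_\lambda(y)$ as a signed sum of products of $r$ factors $h_{k_i}(y)$, each of which lies in the $\mathbb{Z}$-span of Schur polynomials of height $\le n$ by the previous paragraph. To pass from the factors to the product I use the height-additivity of Littlewood--Richardson coefficients: $c^\rho_{\alpha\beta}\neq 0$ forces $\mathrm{ht}(\rho)\le\mathrm{ht}(\alpha)+\mathrm{ht}(\beta)$. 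This is immediate from the Littlewood--Richardson rule, since in any LR skew tableau of shape $\rho/\alpha$ with content $\beta$ the strictly increasing first column of the skew part uses only entries from $\{1,\dots,\mathrm{ht}(\beta)\}$, whence $\mathrm{ht}(\rho)=\rho'_1\le\alpha'_1+\mathrm{ht}(\beta)=\mathrm{ht}(\alpha)+\mathrm{ht}(\beta)$.

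Iterating this, any product of $r$ Schur polynomials of height $\le n$ is a non-negative integer combination of Schur polynomials of height $\le nr$, and this support bound is preserved under signed sums, since cancellations can only shrink the support. Therefore $s_\lambda(y)$ is a $\mathbb{Z}$-linear combination of $s_\mu(x_1,\dots,x_N)$ with $\mathrm{ht}(\mu)\le nr=n\cdot\mathrm{ht}(\lambda)$, which is what we need. The main technical point is the Littlewood--Richardson height-additivity; with it in hand the rest is a routine symmetric-functions manipulation.
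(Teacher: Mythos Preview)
Your argument is correct and follows the same overall plan as the paper: handle the one-row case $\lambda=(k)$ via the Cauchy decomposition of $S^k(U\oplus nE)$, then bootstrap to general $\lambda$ by expressing $s_\lambda$ through products of complete symmetric functions and invoking height-subadditivity under products. The only substantive difference is in how you pass from the general $\lambda$ to products of $h_k$'s. The paper uses the positive fact (a consequence of Pieri) that $S^\lambda(V)$ is a \emph{direct summand} of $\bigotimes_{i=1}^{\mathrm{ht}(\lambda)} S^{(\lambda_i)}(V)$, so every step stays at the level of honest submodules and no signs ever enter; the height bound then follows from Pieri applied to a tensor product of $n\cdot\mathrm{ht}(\lambda)$ symmetric powers. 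You instead invoke the Jacobi--Trudi determinant, which introduces alternating signs, and then observe that taking a signed $\mathbb{Z}$-combination can only shrink the Schur support, so the bound survives. Both are valid; the paper's route is slightly cleaner in that it never leaves the category of actual $GL(E)$-modules, whereas your route is purely symmetric-function-theoretic and requires the extra remark about support under cancellation. Your Littlewood--Richardson height-additivity argument is fine and plays the same role as the final Pieri step in the paper.
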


\begin{proof}  Suppose that
$S^{\mu}(E)$ occurs as a direct summand in $S^{\lambda}(V)$, where
$V=U+nE$.
It follows from Pieri's rules (see \cite{macdonald}) that
\begin{align}\label{eq:summand_1}
S^{\lambda}(V)\text{ is a direct summand in }\bigotimes_{i=1}^{\mathrm{ht}(\lambda)}S^{(\lambda_i)}(V),
\end{align}
where for the partition $(d)$ with only one non-zero part, $S^{(d)}(V)$ is the $d^{\mathrm{th}}$ symmetric tensor power of $V$.
Therefore $S^{\mu}(E)$ is a direct summand in
$\displaystyle \bigotimes_{i=1}^{\mathrm{ht}(\lambda)}S^{(\lambda_i)}(U+nE)$. Any simple $GL(E)$-module summand of
$S^{(\lambda_i)}(U+nE)$ is isomorphic to $S^{\nu^i}(E)$ for some partition $\nu^i=(\nu^i_1,\dots,\nu^i_n)$ with
$\mathrm{ht}(\nu_i)\le n$ by the Cauchy identity (see \cite{procesi}).
Therefore again by \eqref{eq:summand_1},  $S^{\mu}(E)$ is a direct summand in
$S^{\nu^1}(E)\otimes \cdots \otimes S^{\nu^{\mathrm{ht}(\lambda)}}(E)$,
hence $S^{\mu}(E)$ is a direct summand in
\[
\bigotimes_{i=1}^{\mathrm{ht}(\lambda)}\bigotimes_{j=1}^nS^{(\nu^i_j)}(E).
\]
We conclude by Pieri's rules that $\mathrm{ht}(\mu)$ is bounded by the number $n\mathrm{ht}(\lambda)$ of tensor factors in the above expression.
\end{proof}

\begin{proofof}{Theorem~\ref{thm:noncomm_weyl}}
Set $V=U+mW=U+W\otimes K^m$. Since $F(\mathfrak{R},V)$ satisfies the Capelli identity
\eqref{eq:capelli}, by a theorem of Regev \cite{regev} (see also \cite[Theorem 2.3.4]{drensky_formanek})
no $S^{\lambda}(V)$ with $\mathrm{ht}(\lambda)>h=h(\mathcal{R})$ occurs
as a summand of $F(\mathfrak{R},V)$. So the degree $d$ homogeneous component of
$F(\mathfrak{R},V)$ has the $GL(V)$-module decomposition
\[
F(\mathfrak{R},V)_d \cong
\sum_{\lambda\vdash d \atop \mathrm{ht}(\lambda)\le h}m^{\lambda} S^{\lambda}(V)
\]
with some non-negative integers $m^{\lambda}$.
Setting $E=K^m$ and $n=\dim(W)$,
Lemma~\ref{lemma:pieri} shows that as a $GL(E)$-module,
$F(\mathfrak{R},U+W\otimes E)_d$ decomposes as
\begin{align} \label{eq:summand_2}
F(\mathfrak{R},U+nE)_d \cong\sum_{\mu\vdash d \atop \mathrm{ht}(\mu)\le nh}r^{\mu}S^{\mu}(E)
\end{align}
with some non-negative integers $r^{\mu}$.
Since the actions of $G$ and $GL(E)$ commute, the algebra $F(\mathfrak{R},U+W\otimes E)^G$ is a $GL(E)$-submodule in $F(\mathfrak{R},U+W\otimes E)$.
Thus each simple $GL(E)$-module direct summand of $F(\mathfrak{R},U+W\otimes E)^G$ is isomorphic to
$S^{\mu}(E)$ for some partition $\mu$ with $\mathrm{ht}(\mu)\le nh$. Such a summand is generated by a highest weight vector $w$,
having the property that for an element
$g=\mathrm{diag}(z_1,\dots,z_m)\in GL(K^m)=GL(E)$ we have
$g\cdot w=(z_1^{\mu_1}\cdots z_m^{\mu_m})w$. Since $\mu_{nh+1}=\cdots=\mu_m=0$, we conclude that $w$ belongs
to the subalgebra $F(\mathfrak{R},U+W\otimes K^{nh})=
F(\mathfrak{R},U+nhW)$ of $F(\mathfrak{R},U+W\otimes E)$.
Thus $F(\mathfrak{R},U+W\otimes E)^G$ is contained in the $GL(E)$-submodule of
$F(\mathfrak{R},U+W\otimes E)$ generated by $F(\mathfrak{R},U+W\otimes K^{nh})^G$.
Now $\{g\cdot f\mid g\in GL(K^m), \ f\in B\}$ spans a $GL(E)$-submodule, hence it generates a $GL(E)$-stable subalgebra of $F(\mathfrak{R},U+W\otimes E)^G$.
By construction this subalgebra contains $F(\mathfrak{R},U+W\otimes K^{nh})^G$, so
it contains $F(\mathfrak{R},U+W\otimes E)^G$.
\end{proofof}

%%%%%%%%%%%%%%%%%%%%%%%%%%%%%%%%%

\section*{Acknowledgements}

We are very grateful to the referees for their suggestions. In particular,
it was the referee's insight that our results -- originally dealing only with relatively free algebras -- should be treated
as a consequence of a statement on universal enveloping algebras.

%%%%%%%%%%%%%%%%%%%%%%%%%%%%%%%%%%%%%%%%

\end{document}